\numberwithin{equation}{section}
\newtheorem{theorem}{Theorem}[section]
\newtheorem{lemma}[theorem]{Lemma}
\newtheorem{proposition}[theorem]{Proposition}
\newtheorem{corollary}[theorem]{Corollary}
\theoremstyle{definition}
\newtheorem{definition}[theorem]{Definition}
\theoremstyle{remark}
\newtheorem{remark}[theorem]{Remark}
\newtheorem{example}[theorem]{Example}
\newtheorem{fact}[theorem]{Fact}
\newtheorem{question}[theorem]{Question}
\newtheorem{acknowledgement}{Acknowledgement}
\newcommand{\Ass}{\operatorname{Ass}}
\newcommand{\sym}{\operatorname{Sym}}
\newcommand{\Kgrade}{\operatorname{K.grade}}
\newcommand{\Spec}{\operatorname{Spec}}
\newcommand{\Gr}{\operatorname{Gr}}
\newcommand{\Mat}{\operatorname{Mat}}
\newcommand{\pd}{\operatorname{p.dim}}
\newcommand{\gd}{\operatorname{gl.dim}}
\newcommand{\Wdim}{\operatorname{w.dim}}
\newcommand{\fd}{\operatorname{fl.dim}}
\newcommand{\Tor}{\operatorname{Tor}}
\newcommand{\Hom}{\operatorname{Hom}}
\newcommand{\Char}{\operatorname{char}}
\newcommand{\lo}{\longrightarrow}
\newcommand{\fm}{\frak{m}}
\newcommand{\fp}{\frak{p}}
\newcommand{\fa}{\frak{a}}
\newcommand{\fb}{\frak{b}}
\newcommand{\fn}{\frak{n}}
\begin{document}
\author[]{Mohsen Asgharzadeh }

\title[]
{Desingularization of regular algebras}

\address{}
\email{mohsenasgharzadeh@gmail.com}

\subjclass[2010]{Primary  13H05;  Secondary  18A30.}

\keywords{Coherent rings;  direct limits; homological dimensions;  perfect rings; regular rings.
}

\begin{abstract}
We identify families of commutative rings
that can be written as a direct limit of a directed system of noetherian
regular rings and  investigate the
homological properties of such  rings.
\end{abstract}

\maketitle

\section{Introduction}

The goal of this work is to identify  rings  $R$
that can be realized as a direct limit of a directed system $\{R_i:i\in \Gamma\}$ of noetherian
regular rings (which we then call a \textit{densingularization} of $R$), and to investigate the
homological properties of such an $R$.
We emphasize that the poset $\Gamma$ is filtered. A paradigm for this, and one of the motivation
for this work, is a result of Zariski \cite{Z} (and Popescu \cite{P}):

\begin{theorem}\label{zar}(Zariski-Popescu)
Let $(V,\fm)$ be a valuation domain containing a field  $k$ of zero characteristic. Then $V$ has a densingularization.
\end{theorem}

It may be interesting to mention that  the construction of densingularizations  goes back to Akizuki \cite{Ak} and Nagata \cite{N}.
Recall from \cite{Ber} that a ring is said to be \textit{regular}, if each   finitely generated ideal has finite projective dimension.
A ring is called \textit{coherent}, if its finitely generated ideals are finitely presented.
Our first result in  Section 2 is:

\begin{proposition}
Let $R$ be a ring that has a desingularization and $\fp$ a finitely generated prime
ideal in $R$.
 If $R_{\fp} $  is coherent, then $R_{\fp} $ is regular.
\end{proposition}

 Also,  Section 2 is devoted to computing the
homological dimensions of an ideal $I$ of  a ring with  a desingularization $\{R_i:i\in \Gamma\}$.
We do this by imposing some additional assumptions both on the ideal $I$, the rings $R_i$ and the poset $\Gamma$.

A  \textit{quasilocal} ring  is a ring with a unique maximal ideal. A  local
ring  is a noetherian quasilocal ring.
There are many definitions  for the regularity condition  in non-noetherian rings (see e.g. \cite{K}).
One of these is the notion of \textit{super regularity}. This notion was first
introduced  by Vasconcelos  \cite{V1}. A  coherent  quasilocal ring  is called super regular  if  its global dimension
is finite and equal to its weak dimension.
Section 3 deals with a desingularization of super regular rings.
Our first result in this direction is
Proposition \ref{kab}:

\begin{proposition}
Let $\{(R_i,\fm_i)\}$ be a directed system of  local rings with the property that
 $\fm_i^2=\fm_i\cap \fm_{i+1}^2$. If $R:={\varinjlim}R_i$ is coherent and super regular, then each  $ R_i $  is regular.
\end{proposition}

We present a nice application of the notion of super regularity: we compute the global dimension of certain \textit{perfect} algebras. To this end,  suppose
 $R$  is a complete local  domain which is not  a field and
  suppose that its \textit{perfect closure} $R^{\infty}$ is coherent.  In Proposition \ref{rem}  we show that $$\gd(R^{\infty})=\dim R+1.$$

 Let $\{R_i\}$ be a pure directed system of local rings and  suppose that the maximal ideal of
$R:={\varinjlim}_{}R_i$  has a finite free resolution.
 In Proposition \ref{cri}, we show $R$ is noetherian and  regular.
Let $F$ be a finite field. In Proposition \ref{pro}  we present the desingularization  of  $\prod_{\mathbb{N}} F$.
This has some applications. For example,
 $\prod_{\mathbb{N}} F$ is stably coherent.

We cite \cite{G} as a reference book on  commutative coherent rings.

\section{Homological properties of  a desingularization}

We start by introducing some notation.
 By $\pd_R(-)$ (resp. $\fd_R(-)$), we mean
projective dimension (resp. flat dimension)
of an $R$-module. Denote
the $i^{th}$ Koszul homology  module of $R$ with respect to $\underline{x}:=x_1,\ldots,x_n$
by $H_i( \underline{x}; R)$.

\begin{remark} \label{kk}
Let $\{R_i:i\in \Gamma\}$ be a directed  system of  rings and let $\underline{x}:=x_1,\ldots,x_n$ be
in $R:={\varinjlim}R_i$. Let $i_0$ be such that $\underline{x}\subset R_{i_0}$. Then  ${\varinjlim}_{i\geq i_0}H_{\bullet}(\underline{x}, R_i)\simeq H_{\bullet}(\underline{x}, R)$.
\end{remark}

\begin{proof}
This is straightforward and we leave it to the reader.
\end{proof}

\begin{definition}\label{defh}Let $(R,\fm)$ be a quasilocal ring. Suppose $\fm$ is generated by
a finite sequence of elements $\underline{x}:=x_1,\ldots,x_n$.
Recall from Kabele \cite{K} that $R$ is \textit{$H_1$-regular}, if  $H_1( \underline{x}, R)=0$.
Also, $R$ is called \textit{ Koszul regular}, if  $H_i(\underline{x}, R)=0$ for all $i>0$.
\end{definition}

In general, $H_1$-regular rings are not Koszul regular, see \cite{K}.

\begin{lemma} \label{sy2} Let $(R,\fm)$ be a coherent  $H_1$-regular ring. Then $R$ is  Koszul regular.
\end{lemma}

\begin{proof} Coherence regular local
rings are integral domains.
Recall from Definition \ref{defh} that  $\fm$ is finitely generated.
 Let $\underline{x}:=x_1,\ldots,x_n$ be a generating set for  $\fm$.
Since $R$ is coherent and in view of \cite[Lemma 3.7]{AT}, the $R$-module $H_i(\underline{y},R)$
is finitely generated, where $\underline{y}$ is a finite sequence of elements. By using basic properties of Koszul homologies
and  by an easy induction
 one may show that $H_i(x_1,\ldots,x_j; R)=0$ for all $i>0$ and all $j$. We left the routine details to the reader, please see \cite[Page 127-128 ]{Mat}.
\end{proof}

\begin{proposition} \label{sy3} Let $R$ be a ring with a desingularization and let $\fp\in\Spec( R)$ be finitely generated.
 If $R_{\fp} $  is coherent, then $R_{\fp} $ is regular.
\end{proposition}

\begin{proof}
 Let $\{R_i:i\in \Gamma\}$ be a directed system of noetherian regular rings
such that $R:={\varinjlim}R_i$. To simplify the notation, we replace $R_{\fp} $ with $(R,\fm)$ and  $(R_i)_{\fp\cap R_i}$ with $(R_i,\fm_i)$.
In view of the natural isomorphism $R_{\fp}\simeq{\varinjlim}(R_i)_{\fp\cap R_i},$  we may do such a replacement.
 Let $\underline{x}:=x_1,\ldots,x_n$ be a generating set for  $\fm$. Without loss of the generality, we can
 assume that $x_i\in R_j$ for all $i$ and all $j$.
Set $A_i:=R_i/(\underline{x})$. In the light of \cite[Lemma 2.5.1]{MR},\[\begin{array}{ll}
 $$0 \to H_2(R_i,A_i,A_i)\to H_1(\underline{x},R_i) \to A_i^n\to(\underline{x})/(\underline{x})^2 \to 0,\end{array}\]
where $H_\ast(-,-,-)$ is the \textit{Andr\'{e}-Quillen homology}. By Remark \ref{kk},
 Koszul homology behaves well
with   respect to direct limits. Recall from \cite[Proposition 1.4.8]{MR}   that Andr\'{e}-Quillen homology behaves well
with  respect to direct limits.  These induce  the following exact sequence 
$$0 \lo H_2(R,k,k) \lo H_1(\underline{x},R) \lo R^n/\fm R^n \stackrel{\pi}\lo\fm/\fm^2\lo 0,$$
where the map $\pi$ induced from the natural surjective
homomorphism that sends the canonical basis of $A_i^n$ to $\underline{x}$. We view  $\pi$ as a surjective map of finite dimensional
vector spaces with the same dimension. In particular,  $\pi$  is an isomorphism.

Recall that $k$ (resp. $k_i$) is the residue field of $R$ (resp. $R_i$). In the light  of \cite[Proposition 1.4.8, Corollary 2.5.3]{MR}, $H_2(R,k,k)\simeq{\varinjlim}H_2(R_i,k_i,k_i)=0$. Thus $H_1(\underline{x},R)=0$,
because $\pi$ is an isomorphism.  Due to Lemma \ref{sy2}, $\fd_R(k)<\infty$.
Again, as $R$ is coherent and in view of \cite[Corollary 2.5.10]{G},  any finitely generated ideal of $R$
 has finite projective dimension, i.e., $R$ is regular.
\end{proof}

By $\Wdim(R)$, we mean the \textit{weak dimension} of $R$.   By definition
$$ \Wdim (R):=\sup\{\fd(M):M   \textit{ is an }  R\textit{-module}  \},$$  see \cite[Page 20]{G}.

\begin{lemma} \label{sy} Let $\{R_i:i\in \Gamma\}$ be a directed system of
rings such that their weak dimension is bounded by an integer $n$. Set $R:={\varinjlim}R_i$. The following assertions hold:
\begin{enumerate}
\item[$\mathrm{(i)}$]  The flat dimension of an
 $R$-module   is bounded above by  $n$.
\item[$\mathrm{(ii)}$] If $R$ is coherent, then projective dimension of any
finitely presented $R$-module is bounded above by  $n$.
\end{enumerate}
\end{lemma}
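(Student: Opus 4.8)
The plan is to prove both parts by exploiting the fact that weak dimension is measured by vanishing of $\Tor$, and that $\Tor$ commutes with direct limits.

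\medskip

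\noindent\textbf{Part (i).} Let $M$ be a finitely presented $R$-module; I want to bound its flat dimension by $n$. The key tool is the characterization of flat dimension via $\Tor$: one has $\fd_R(M)\le n$ precisely when $\Tor^R_{n+1}(M,N)=0$ for every $R$-module $N$. First I would record the standard descent principle for direct limits: since $R={\varinjlim}R_i$ and $M$ is finitely presented, $M$ descends, i.e.\ there exists an index $i_0\in I$ and a finitely presented $R_i$-module $M_i$ with $M\simeq M_i\otimes_{R_i}R$ for all $i\ge i_0$. This is the standard fact that a finitely presented module over a direct limit comes from a finite level (one only needs the finitely many generators and relations to appear at some stage). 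Now for an arbitrary $R$-module $N$ I would compute
$$\Tor^R_{n+1}(M,N)\simeq\Tor^R_{n+1}(M_i\otimes_{R_i}R,\,N).$$
Using base-change for Tor along with the flat-over-itself and the compatibility of Tor with the limit $R={\varinjlim}R_i$, one gets $\Tor^R_{n+1}(M,N)\simeq{\varinjlim}_i\Tor^{R_i}_{n+1}(M_i,N)$ where $N$ is viewed as an $R_i$-module. Since $\Wdim(R_i)\le n$, each term $\Tor^{R_i}_{n+1}(M_i,N)$ vanishes, hence so does the direct limit, and therefore $\fd_R(M)\le n$.

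\medskip

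\noindent\textbf{Part (ii).} Now assume $R$ is coherent. The point is that over a coherent ring, for a finitely presented module the projective dimension and the flat dimension coincide whenever the flat dimension is finite. By part (i) we already know $\fd_R(M)\le n<\infty$. I would invoke the coherent analogue of the equality $\pd=\fd$ for finitely presented modules; concretely, over a coherent ring a finitely presented module of finite flat dimension admits a finite resolution by finitely generated projectives, because finitely generated flat modules over a coherent local ring are projective and kernels of maps between finitely presented modules remain finitely presented. This is exactly the type of statement recorded in \cite[Corollary 2.5.10]{G} and used in the proof of Lemma \ref{sy2}. Applying it gives $\pd_R(M)=\fd_R(M)\le n$, which is the claim.

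\medskip

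\noindent The step I expect to be the main obstacle is the interchange of $\Tor$ with the direct limit in part (i), specifically justifying $\Tor^R_{n+1}(M,N)\simeq{\varinjlim}_i\Tor^{R_i}_{n+1}(M_i,N)$ cleanly when the second argument $N$ is a fixed $R$-module that must be restricted along each transition $R_i\to R$. One has to be careful that $R$ is flat over $R_i$ is \emph{not} assumed, so the base-change comparison must be set up through a spectral-sequence or direct colimit-of-complexes argument rather than naive flat base change; the correct formulation uses that a projective resolution of $M_i$ over $R_i$ tensors up to a flat resolution of $M$ over $R$ only after passing to the limit, and that $\Tor$ commutes with the filtered colimit in the ring variable. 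Once that bookkeeping is arranged, both parts follow formally. Part (ii) is then essentially a citation once finiteness of $\fd$ is in hand.
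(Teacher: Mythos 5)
Your proof is correct and follows the paper's strategy in both parts, but in part (i) you take a detour that the paper avoids. The paper simply cites \cite[VI, Exercise 17]{CE}: for \emph{arbitrary} $R$-modules $M$ and $N$, viewed as $R_i$-modules by restriction of scalars along $R_i\to R$, one has $\Tor_j^{R}(M,N)\cong\varinjlim_i\Tor_j^{R_i}(M,N)$; since $\Wdim(R_i)\le n$, each term vanishes for $j>n$, and that is the whole argument. In particular, no descent of $M$ to a finite level is needed, no base-change comparison between $\Tor^{R_i}(M_i,-)$ and $\Tor^{R}(M,-)$ ever arises, and the finitely presented hypothesis plays no role in (i) at all --- the paper's argument actually proves $\Wdim(R)\le n$. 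The ``main obstacle'' you flag (justifying the interchange when the second variable is a fixed $R$-module restricted along each $R_i\to R$) is thus dissolved by taking \emph{constant} systems in both variables; your cartesian-system variant, with $M_i\otimes_{R_i}R\simeq M$, is also a legitimate instance of the general fact that $\Tor$ commutes with filtered colimits of rings and modules, so your route works, it is just longer than necessary. In part (ii) your argument is essentially the paper's: the paper runs the induction explicitly --- coherence keeps the syzygy $\Omega(M)$ finitely presented, and the base case is that finitely presented flat modules are projective \cite[Theorem 2.1.4(3)]{G} --- whereas you cite the statement wholesale. One small slip there: your phrase ``finitely generated flat modules over a coherent local ring are projective'' invokes a local hypothesis that is not available (and is not needed); what is actually used is that finitely presented flat modules over an arbitrary ring are projective.
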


\begin{proof} $\mathrm{(i)}$: Let $M$ and $N$ be two $R$-modules. By \cite[VI, Exercise 17]{CE},
$\Tor_j^{R}(M,N)\simeq\underset{i}{\varinjlim}\Tor_j^{R_i}(M,N),$
which is zero for all $j>n$ and this is the thing that we search for.

$\mathrm{(ii)}$:
This follows by \cite[Corollary 11.5]{St}.
\end{proof}

Let $\fa$ be an ideal of a ring $R$ and $M$ an
$R$-module. Let $\Sigma$ be the family of all finitely generated
subideals $\fb$ of $\fa$. The \textit{Koszul grade} of a finitely generated ideal $\frak a:=(x_1,\ldots,x_n)$ on $M$
is defined by
$$\Kgrade_R(\fa,M):=\inf\{i \in\mathbb{N}\cup\{0\} | H^{i}(\Hom_R(
K_{\bullet}(\underline{x}), M)) \neq0\}.$$ Note that by
\cite[Corollary 1.6.22]{BH} and \cite[Proposition 1.6.10 (d)]{BH},
this does not depend on the choice of generating sets of $\fa$. For
an ideal $\frak a$ (not necessarily finitely generated), Koszul grade of
$\fa$ on $M$ can be defined by
$\Kgrade_R(\fa,M):=\sup\{\Kgrade_R(\fb,M):\fb\in\Sigma\}.$ By using
\cite[Proposition 9.1.2 (f)]{BH}, this definition coincides with the
original definition for finitely generated ideals.

\begin{corollary}\label{coor}
Let $\{R_i:i\in \Gamma\}$ be a directed system of coherent  regular quasilocal rings such that their
 Krull dimension is bounded by an integer. Suppose    each $R_i$ is noetherian and $\Gamma$ is countable, or
$R$ is  coherent. Then $R:={\varinjlim}R_i$ is regular.
\end{corollary}

\begin{proof}
First, suppose that  each $R_i$ is noetherian and $\Gamma$ is countable. Any ideal of $R$ is countably generated.
It follows by the proof of \cite[Corollary 2.47]{O1},  that $\pd(-)\leq \fd(-)+1$. It remains
  to recall $\Wdim(R_i)=\dim (R_i)$, because $R_i$ is noetherian.

Now, suppose that $R$ is coherent. Let $I$ be a finitely generated ideal of $R$ generated by $\underline{x}:=x_1,\ldots,x_n$. There is  $j\in \Gamma$ such that
$\underline{x}\subseteq R_i$ for all $i\geq j$. Denote $\underline{x}R_i$ by $I_i$ and define $\fm_i:=\fm\cap R_i$. In view of  \cite[Lemma 3.2]{AT}, $\Kgrade_{R_i}(\fm_i,R_i)\leq\dim R_i$.
Note that $R_i/I_i$ has a finite free resolution.
By \cite[Chap. 6, Theorem 2]{No}, \[\begin{array}{ll}
\fd(R_i/I_i)&\leq\pd(R_i/I_i)\\
&=\Kgrade(\fm_i,R_i)-\Kgrade(\fm_i,R_i/I_i)\\
&\leq\Kgrade(\fm_i,R_i)\\
&\leq \dim R_i.
\end{array}\]Thus, $$\{\Wdim R_i:i\in \Gamma\}\leq\sup\{\dim R_i:i\in \Gamma\}< \infty.$$ So, Lemma \ref{sy} completes the proof.
\end{proof}

\begin{proposition}\label{two} Let $R$ be a  quasilocal ring  with a desingularization $\{R_i:i\in \Gamma\}$.
The following holds:
\begin{enumerate}
\item[$\mathrm{(i)}$]   Any two-generated ideal of $R$ has   flat dimension bounded by  $1$.
\item[$\mathrm{(ii)}$]  If $\Gamma$ is countable, then any two-generated ideal  of $R$  has   projective dimension bounded by  $2$.
\end{enumerate}
 \end{proposition}

 \begin{proof}
$\mathrm{(i)}$: Let $I=(a,b)$ be a two-generated ideal of  $R$.
 Without loss of generality, we may assume that $R_i$ is local.
There is $i_0\in \Gamma$ such that  $\{a,b\}\subset R_i$ for all $i>i_0$. We now apply an idea of Buchsbaum-Eisenbud.
As, $R_i$ is an unique factorization domain and in view of \cite[Corollary 5.3]{BE}, $\{a,b\}$ has a greatest common divisor $c$
such that $\{a/c,b/c\}$ is a regular sequence. Thus, $\pd_{R_i}((a/c,b/c)R_i)<2$. Multiplication by $c$ shows that  $(a/c,b/c)\simeq(a,b)$.
Conclude that
$\pd_{R_i}((a,b)R_i)<2.$ Then by the same reasoning as Lemma  \ref{sy}(i), $\fd_R(I)<2$.

$\mathrm{(ii)}$: In view of part $\mathrm{(i)}$ the claim follows by the argument of Corollary \ref{coor}.
\end{proof}

We will use  the following result several times.

\begin{lemma}\label{kunz}(See \cite[Theorem 23.1]{Mat})
Let $\varphi$ be a local map from a  regular local ring $(R,\fm)$ to a  Cohen-Macaulay local ring $(S,\fn)$.
Suppose $\dim R+\dim S/\fm S=\dim S$. Then $\varphi$ is flat.
\end{lemma}

\begin{example}
The conclusion of Proposition \ref{two} can not carry over
three-generated ideals.
\end{example}

\begin{proof}
Let $k$ be any field. For each $n\geq 3$, set
$R_n:=k[x_1,\ldots,x_{2n-4} ]$. Define
 $$I_n:=(x_1,x_2)\cap \ldots\cap(x_{2n-5},x_{2n-4}),$$
$f_n:=x_1x_3\ldots x_{2n-5}$, and
$g_n:=x_2x_4\ldots x_{2n-4}$. Let $h_n$ be such that $((f_n,g_n):h_n)=I_n$.
It is proved in \cite{Bu} that $$\pd_{R_n}(R_n/(f_n,g_n,h_n))=n   \ \ (\ast)$$
 The assignments
 $x_1\mapsto x_1x_{2n-3}$, $x_2\mapsto x_2x_{2n-2}$, and
$x_i\mapsto x_{i}$ (for $i\neq1,2$)
defines the   ring homomorphism $\varphi_{n,n+1}:R_n\lo R_{n+1}.$ This has the following properties:
$\varphi_{n,n+1}(f_n)=f_{n+1}$,
$\varphi_{n,n+1}(g_n)=g_{n+1}$, and
$\varphi_{n,n+1}(I_n)\subseteq I_{n+1}$.
By using this we can choose $ h_{n+1}$ be  such that $\varphi_{n,n+1}(h_n)=h_{n+1} $.
Look at the directed system $\{R_n,\varphi_{n,n+1}\}$ and denote the natural
map from $R_n$ to $R:={\varinjlim}R_n$ by $\varphi_{n}$.
In view of the following commutative diagram,

$$\xymatrix{
& R_n\ar[r]^{\varphi_{n,n+1}}\ar[d]_{\varphi_{n}}&R_{n+1}\ar[dl]^{\varphi_{n+1}}\\
&R
 }$$the ideal $I:=(\varphi_{n}(f_n),\varphi_{n}(g_n),\varphi_{n}(h_n))R$
is independent of  $n$.

\begin{enumerate}
\item[Claim A.] The extension $R_n\to R_{n+1}$ is flat.
\item[Indeed,] denote the unique graded  maximal ideal of $R_n$ by $\fm_n$. Set $A_n:=(R_n)_{\fm_n} $. In view of \cite[Page 178]{Mat}, $R_n\to R_{n+1}$ is flat provided the induced map
$\psi_n:A_n\to A_{n+1}$ is flat.  In order to prove $\psi_n$ is flat, we note that
$\dim (A_n)=2n-4$ and $\dim (A_{n+1})=2n-2$. Let $\fm$ be the unique graded maximal ideal of $S:=k[x_1,x_2,x_{2n-3},x_{2n-2}]$. Then $\frac{A_{n+1}}{\fm_n A_{n+1}}\simeq \frac{S_\fm}{(x_1x_{2n-3},x_2x_{2n-2})}$. Since $x_2x_{2n-2}\notin\bigcup_{\fp\in\Ass\left(S_\fm/(x_1x_{2n-3})\right)}\fp$, the sequence $x_1x_{2n-3},x_2x_{2n-2}$
is  regular  over $S _\fm$. In particular, $\dim(\frac{A_{n+1}}{\fm_n A_{n+1}})=2.$
Thus, $\dim(A_{n+1})=\dim(A_n)+\dim(\frac{A_{n+1}}{\fm_nA_{n+1}}) $. In view of Lemma \ref{kunz} we observe that $A_n \to A_{n+1}$ is flat. This finishes the proof of the claim.
\end{enumerate}

Set $T_n:=\Tor_n^{R_n}(R_n/(f_n,g_n,h_n),k)$.
Due to $(\ast)$, $T_n\neq 0.$ Since $T_n$ is graded (see \cite[Page 33]{BH}), one has $(T_n)_{\fm_n}\neq 0$ (see e.g., \cite[Proposition 1.5.15(c)]{BH}). By \cite[Exercise 7.7]{Mat}, $\Tor$-modules compute with localization.
 In the light of the rigidity property of $\Tor$-modules over equal-characteristic regular local rings (Auslander–-Lichtenbaum) we see that $\Tor_{n-i}^{R_n}(R_n/(f_n,g_n,h_n),k)_{\fm_n}\neq 0$  for all $i\geq 0$. For more details, please see \cite{au}. In particular, $\Tor_{n-i}^{R_n}(R_n/(f_n,g_n,h_n),k)\neq 0$ for all $i\geq 0$.
The map $\varphi_{n,n+1}$ induces the following map:$$ \tau_{n,n+1}^{\ell}:\Tor_{\ell}^{R_n}(R_n/(f_n,g_n,h_n),R_{n}/ \fm_{n})\to  \Tor_{\ell}^{R_{n+1}}(R_{n+1}/(f_{n+1},g_{n+1},h_{n+1}),R_{n+1}/ \fm_{n}R_{n+1}).$$

\begin{enumerate}
\item[Claim B.] The map $\tau_{n,n+1}^{\ell}$ is one to one. \item[Indeed,]
in view of  Claim A, the extension $R_n\to R_{n+1}$ is flat. Set $T:=\Tor_{\ell}^{R_{n}}(R_{n}/(f_{n},g_{n},h_{n}),R_{n}/ \fm_{n})$. This is a graded module over $R_n$. Recall from \cite[Exercise 7.7]{Mat} that $$\Tor_{\ell}^{R_{n+1}}(R_{n+1}/(f_{n+1},g_{n+1},h_{n+1}),R_{n+1}/ \fm_{n}R_{n+1})\simeq T\otimes_{R_{n}} R_{n+1}.$$ Due to the proof of \cite[Theorem 7.4(i)]{Mat}, $T\hookrightarrow T\otimes_{R_{n}} R_{n+1} $  is one to one. Thus, $\tau_{n,n+1}^{\ell}$ is one to one. This finishes the proof of the claim.
\end{enumerate}

Let $\ell\leq n$. We combine Claim B  along with \cite[VI, Exercise 17]{CE} to construct the following injection

 \[\begin{array}{ll}
R_n/\fm_{\ell} R_n&\simeq (R_{\ell}/\fm_{\ell})\otimes_{R_{\ell}} R_n\\
&\hookrightarrow\Tor_{\ell}^{R_n}(R_n/(f_n,g_n,h_n),R_n/\fm_{\ell} R_n)\\
&\hookrightarrow \underset{i}{\varinjlim}\Tor_{\ell}^{R_n}(R_n/(f_n,g_n,h_n),R_n/\fm_{\ell} R_n)\\
&\simeq\Tor_{\ell}^{R}(R/I,R/\fm_{\ell}R).
\end{array}\]
Thus,
$0\neq  R/\fm_{\ell}R\hookrightarrow\Tor_{\ell}^{R}(R/I,R/\fm_{\ell}R)$. So  $\pd_R(R/I)=\infty,$ as claimed.
\end{proof}

\section{ Desingularization via super regularity}

We will use the following result several times.

\begin{lemma}\label{sr}(See \cite{V1})
Let $(R,\fm)$ be a  super regular ring. Then $\fm$ can be generated by a regular sequence.
In particular, $\fm$ is finitely generated.
\end{lemma}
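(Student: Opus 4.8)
The statement to prove is Lemma \ref{sr}: if $(R,\fm)$ is a coherent super-regular ring, then $\fm$ can be generated by a regular sequence. Since super-regular means $\gd R = \Wdim R < \infty$, the plan is to exploit both dimensions together, using coherence to bridge between weak (flat) and projective homological data.

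\medskip

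The plan is to first record the homological input. Let $n := \Wdim R = \gd R$. Since $R$ is quasilocal with maximal ideal $\fm$ and $R$ is coherent, the residue field $k = R/\fm$ is a finitely presented module. By definition of super-regularity, $\fd_R(k) \le \Wdim R = n < \infty$ and $\pd_R(k) \le \gd R = n < \infty$, so $k$ has a finite free resolution. I would next argue that $\fm$ itself is finitely generated: because $R$ is coherent and $\pd_R(k) < \infty$, the kernel $\fm$ in the exact sequence $0 \to \fm \to R \to k \to 0$ is finitely presented, and over a quasilocal ring with $k$ having finite projective dimension one concludes $\fm$ is finitely generated (this is the coherent analogue of the noetherian local situation).

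\medskip

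The core step is to produce the regular sequence generating $\fm$. First I would invoke a minimal generating argument: choose elements $x_1,\ldots,x_r \in \fm$ whose images form a $k$-basis of $\fm/\fm^2$, so that $r = \dim_k(\fm/\fm^2)$ and, by Nakayama (valid since $\fm$ is finitely generated), these generate $\fm$. It then remains to show $x_1,\ldots,x_r$ is a regular sequence. The key is to compare this minimal number of generators with the homological dimension. Using the Koszul complex $\mathbb{K}_\bullet(\underline{x})$, I would show that the exactness of the Koszul complex follows from the finiteness of $\fd_R(k)$ together with coherence: since $k$ has finite flat dimension and $H_1(\mathbb{K}_\bullet(\underline{x}))$ measures the failure of $x_1,\ldots,x_r$ to be a regular sequence, one checks that the rigidity forced by $\pd_R(k) = r$ (matching the minimal number of generators) forces the vanishing of the higher Koszul homology. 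Concretely, the equality $\pd_R(k) = \mu(\fm) = r$ combined with the Koszul complex being the initial piece of a minimal free resolution of $k$ forces $\mathbb{K}_\bullet(\underline{x})$ to be acyclic, which is precisely the statement that $\underline{x}$ is a regular sequence.

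\medskip

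The main obstacle I anticipate is the passage from homological finiteness to the exactness of the Koszul complex in the \emph{non-noetherian} coherent setting, where the usual Auslander--Buchsbaum--Serre machinery is unavailable. The delicate point is establishing $\pd_R(k) = \dim_k(\fm/\fm^2)$ (the coherent analogue of the regular local ring characterization), which is exactly where super-regularity is needed: the inequality $\Wdim R \ge \fd_R(k) \ge \dim_k(\fm/\fm^2)$ always holds, but to force equality and rigidity one uses $\gd R = \Wdim R$ so that the flat-dimension bound transfers to a projective-dimension bound on the finitely presented module $k$. I would lean on the coherence to guarantee that syzygies remain finitely presented at each stage, so that the minimal free resolution of $k$ is genuinely finite and has length $r$, and then identify it with the Koszul complex to conclude. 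This is presumably the content of the cited reference \cite{V1}, so I would aim to reconstruct this argument carefully.
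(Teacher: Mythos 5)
First, note that the paper contains no proof of Lemma \ref{sr} at all: it is quoted verbatim from Vasconcelos \cite{V1}. So your proposal can only be judged against the mathematics itself, and judged that way it has a fatal gap right at its foundation, namely the finite generation of $\fm$. Your argument for it is circular: coherence says that \emph{finitely generated} ideals are finitely presented, so you may not apply it to $\fm$ (nor declare $k=R/\fm$ finitely presented, nor speak of a finite free resolution of $k$ by finite-rank frees) before you already know $\fm$ is finitely generated. Worse, the implication you appeal to --- that a coherent quasilocal ring with $\pd_R(k)<\infty$ has finitely generated maximal ideal, ``the coherent analogue of the noetherian local situation'' --- is simply false. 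Take $V=\bigcup_{n}k[[t^{1/n!}]]$, the valuation domain with value group $\mathbb{Q}$: it is quasilocal and coherent (finitely generated ideals are principal, hence free, hence finitely presented), its maximal ideal $\fm_V$ satisfies $\fm_V=\fm_V^2$ and is not finitely generated, yet $\fm_V$ is a countable ascending union of principal ideals, so the telescope resolution gives $\pd_V(\fm_V)\le 1$ and hence $\pd_V(V/\fm_V)\le 2<\infty$. This $V$ has $\Wdim V=1<2=\gd V$, so it is not super-regular and does not contradict the lemma; but it shows exactly where the content of the lemma lies: the \emph{equality} $\gd R=\Wdim R$ is what must be used to force $\fm$ to be finitely generated, i.e.\ to rule out precisely this valuation-type behavior.

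Your proposal, however, never genuinely uses that equality. The one place you invoke it --- to ``transfer the flat-dimension bound to a projective-dimension bound on the finitely presented module $k$'' --- is vacuous: over a coherent ring, every finitely presented module of finite flat dimension automatically has equal finite projective dimension (syzygies stay finitely presented, and finitely presented flat modules are projective); this is exactly the paper's own Lemma \ref{sy}(ii) and needs no super-regularity. Consequently your argument, as written, would ``prove'' the conclusion for the ring $V$ above, where it fails, which is a decisive sign that the main idea is missing. The remaining step --- that $\pd_R(k)=\mu(\fm)$ and minimality force the Koszul complex on a minimal generating set to be acyclic --- is also unsubstantiated (even in the noetherian case the standard argument is an induction on a regular element of $\fm\setminus\fm^2$, not a rigidity claim of this kind), but that is secondary. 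A correct proof, along the lines of \cite{V1}, has to work with finite free resolutions of the finitely generated ideals that realize $\Wdim R$, use the Auslander--Buchsbaum--Northcott grade theory to produce regular elements (in particular to see that $R$ is a domain and that $\fm\neq\fm^2$), and induct on the weak dimension by passing to $R/(x)$ for a regular $x\in\fm\setminus\fm^2$; the hypothesis $\gd R=\Wdim R$ is what allows the induction to close, and it is precisely the hypothesis your sketch leaves idle.
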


The notation $\sym_{R}(-)$ stands for the symmetric algebra of an $R$-module.
Also, we set
$\Gr_R(I):=\bigoplus _{i=0}^{\infty}I^i/I^{i+1},$ where $I$ is an ideal of  $R$.

\begin{lemma}\label{gr}
Let $\{(R_i,\fm_i,k_i):i\in \Gamma\}$ be a directed system of  local rings.  Set $R:=\varinjlim R_i$, $\fm=\varinjlim \fm_i$ and $k=\varinjlim k_i$.
The following holds:
\begin{enumerate}
\item[$i)$] $\Gr_{R}(\fm)\simeq{\varinjlim}_i \Gr_{R_i}(\fm_i)$.
\item[$ii)$] $\sym_k(k^{\oplus{\mu(\fm)}})\simeq{\varinjlim}_i \sym_{k_i}( k_i^{\oplus{\mu(\fm_i)}}) $.
\end{enumerate}
\end{lemma}

\begin{proof}
$i)$ Taking  colimit
of the following exact sequence of directed systems
$$0\longrightarrow \{\fm_i^{n+1}\}_i\longrightarrow\{\fm_i^{n}\}_i\longrightarrow\{\fm_i^{n+1}/ \fm_i^{n}\}_i\longrightarrow0,$$ and using 5-lemma,
yields that  ${\varinjlim}_i \fm_i^n/ \fm_i^{n+1}\simeq\fm^n/ \fm ^{n+1}$. In particular, $\Gr_R(\fm)\simeq{\varinjlim}_i \Gr_{R_i}(\fm_i)$.

$ii)$ This is in \cite[8.3.3]{G}.
 \end{proof}

\begin{proposition}\label{kab}
 Let $\{(R_i,\fm_i):i\in \Gamma\}$ be a directed system of   local rings with the property that
 $\fm_i^2=\fm_i\cap \fm_{i+1}^2$. If $R:={\varinjlim}R_i$ is coherent and super regular, then  each $R_i$  is regular.
\end{proposition}

\begin{proof}
Denote the maximal ideal of $R$ by $\fm$ and denote the residue field of $R$ (resp. $R_i$) by $k$ (resp. $k_i$).
In view of Lemma \ref{sr}, $\fm$ is generated by a regular sequence. Thus things equipped
with the following isomorphism
$$
\begin{CD}
\theta:\sym_{k}(k^{\oplus {\mu(\fm )}}) @>>> \Gr_R(\fm):=\bigoplus _{i=0}^{\infty}\fm^i/\fm^{i+1}.\\
\end{CD}
$$
 Look at the natural epimorphism$$
\begin{CD}
\theta_i:\sym_{k_i}( k_i^{\oplus{\mu(\fm_i)}}) \twoheadrightarrow \Gr_{R_i}(\fm_i),\\
\end{CD}
$$
and the natural map $\varphi_i:V_i:=\fm_i/ \fm_i^2\hookrightarrow V_{i+1}:=\fm_{i+1}/ \fm_{i+1}^2.$
We claim that:

Claim A. The map $\vartheta_{i}:=\sym(\varphi_i)$ is monomorphism.

Indeed,
we look at the following diagram:
$$\xymatrix{
 \sym_{k_i}(V_i)\ar[dr]_{\vartheta_{i}}\stackrel{f}\hookrightarrow&\sym_{k_i}(V_{i+1})\stackrel{g}\hookrightarrow
 \sym_{k_i}(V_{i+1})\otimes_{k_i}k_{i+1}\ar[d]_{h}\ar[r]^{\  \   \  \ \simeq}&\sym_{k_{i+1}}
 (V_{i+1}\otimes_{k_i}k_{i+1})\ar[d]_{  \simeq}\\
                               &\sym_{k_{i+1}}(V_{i+1})\ar[r]^{i}& \sym_{k_{i+1}}(\bigoplus_{\dim_{k_i}(k_{i+1})} V_{i+1}) \\
                    & & & &}$$Remark that
\begin{enumerate}
\item[$1)$] Since $V_{i}$ is a direct summand of $V_{i+1}$ as a $k_i$-vector space, $f$ is a monomorphism.
\item[$2)$] The map $g$ is monomorphism,  because $k_i$ is a field.
\item[$3)$] The horizontal isomorphism follows by \cite[8.3.2]{G}.
\item[$4)$] The vertical isomorphism follows by $\bigoplus_{\dim_{k_i}(k_{i+1})}  V_{i+1}\simeq  V_{i+1}\otimes_{k_i}k_{i+1}$.
\item[$5)$] Since $V_{i+1}$ is a direct summand of $\bigoplus V_{i+1}$ as a $k_{i+1}$-vector space, $i$ is a
monomorphism.
\end{enumerate}
 By these, we conclude that the map $h$ is a monomorphism.
So $\vartheta_{i}:\sym_{k_i}(V_i)\to \sym_{k_{i+1}}(V_{i+1})$
is  monomorphism. This completes the proof of Claim A.

Set $K_i:=\ker\theta_i$. Also, remark that  $\vartheta_{i}(K_i)\subseteq K_{i+1}$.
Again we denote the restriction map by $\vartheta_{i}:K_i\hookrightarrow K_{i+1}.$
Recall that ${\varinjlim} k_i\simeq k$ and ${\varinjlim}(\fm_i^n/\fm_i^{n+1})\simeq\fm ^n/\fm ^{n+1}$. In view of Lemma \ref{gr},
 $\sym(-)$ and $\Gr(-)$ behave well with respect to
direct limits.
 Hence, $\theta={\varinjlim} \theta_i$. Put all of these together to observe $$ K_i\hookrightarrow{\varinjlim} K_i\simeq\ker \theta=0.$$
 So, $\theta_i$ is an isomorphism. This means that $\fm_i$ is generated by a regular sequence. The regularity of $R_i$ follows by this, because
$R_i$ is noetherian.
\end{proof}

By $\gd(R)$  we mean the \textit{global dimension}  of $R$.
Let $R$ be a  noetherian local domain of prime characteristic $p$. Recall that the  perfect closure  of  $R$ is defined by adjoining to $R$ all higher $p$-power
 roots of all elements of $R$ and denote it by $R^\infty$.

\begin{proposition}\label{rem}
Let $R$ be a local  domain  of prime characteristic which is either excellent or homomorphic
image of a Gorenstein local ring and suppose that its perfect closure is coherent (e.g., $R$ is regular).
 If $R$ is not a field, then $\gd(R^{\infty})=\dim R+1$.
\end{proposition}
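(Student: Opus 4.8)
The plan is to compute $\Wdim(R^\infty)$ and $\gd(R^\infty)$ separately and to show that the two differ by exactly one; write $d:=\dim R$. The upshot will be that $R^\infty$ is \emph{not} super-regular, the gap of $1$ being the whole point of the statement.

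First I would put $R$ into convenient form. Following the Gamma construction in Example \ref{per}(i), I would replace $R$ by its completion and then by an F-finite complete normal local domain with the same perfect closure; the excellence (or Gorenstein-image) hypothesis is precisely what keeps $R^\infty$ coherent and normal through these reductions. Then $R^{1/p^n}$ is a finite $R$-module for every $n$, so $R^\infty=\varinjlim_n R^{1/p^n}$ is a countably generated $R$-module and \emph{every} ideal of $R^\infty$ is countably generated; moreover $\dim R^\infty=d$ since $R\hookrightarrow R^\infty$ is integral. Because $R^\infty$ is perfect, its Frobenius endomorphism is a ring isomorphism, hence faithfully flat. Using the Cohen--Macaulay behaviour secured by the reduction (and the balanced big Cohen--Macaulay property of $R^\infty$ over $R$), a system of parameters $\underline{x}=x_1,\dots,x_d$ of $R$ becomes a regular sequence on each $R^{1/p^n}$, hence on $R^\infty$.

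Next I would establish $\Wdim(R^\infty)=d$. For the upper bound, flatness of Frobenius together with coherence forces every finitely generated ideal of $R^\infty$ to have finite flat dimension; tracking the Koszul complex $\mathbb{K}_\bullet(\underline{x})$ through the colimit $\varinjlim_n R^{1/p^n}$ in the spirit of Lemma \ref{sy}, and using that $\underline{x}$ is a regular sequence, bounds the flat dimension of the residue field $\kappa:=R^\infty/\fm^\infty$, and of every finitely presented module, by $d$. For the lower bound, the regular sequence $\underline{x}$ makes $H_\ast(\mathbb{K}_\bullet(\underline{x}))$ detect $\depth d$, so $\fd_{R^\infty}(\kappa)=d$ exactly. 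For the global dimension I would then squeeze. The upper bound $\gd(R^\infty)\le\Wdim(R^\infty)+1=d+1$ follows from the inequality $\pd(-)\le\fd(-)+1$, valid once every ideal is countably generated (the argument of \cite{O1}), which applies by the first paragraph.

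The decisive point, which I expect to be the main obstacle, is the strict lower bound $\gd(R^\infty)\ge d+1$: I must produce a module whose projective dimension strictly exceeds its flat dimension, the natural candidate being $\kappa$ itself. Since $R$ is not a field and $R^\infty$ is perfect, the maximal ideal $\fm^\infty=\sqrt{\underline{x}R^\infty}$ contains all roots $x_i^{1/p^n}$ and so is countably but \emph{not} finitely generated; hence $\kappa$ is not finitely presented. I would then run an Osofsky-type argument showing that a countably-but-not-finitely generated ideal over a coherent ring of finite weak dimension cannot admit a projective resolution as short as its flat resolution, forcing $\pd_{R^\infty}(\kappa)=\fd_{R^\infty}(\kappa)+1=d+1$. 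Combining the bounds yields $\gd(R^\infty)=d+1$. Two steps deserve the most care: ensuring the exact value $\Wdim(R^\infty)=d$ without assuming $R$ regular (where the coherence hypothesis, the reduction of Example \ref{per}(i), and the regular-sequence property enter), and the realization of the jump in the last paragraph, i.e.\ that non-finite generation of $\fm^\infty$ raises the projective dimension by exactly one rather than merely bounding it; this is exactly the failure of super-regularity for $R^\infty$, in contrast with Lemma \ref{sr}.
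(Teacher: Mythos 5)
Your outline of the easy half is fine and matches the paper's (the paper simply cites \cite[Theorem 1.2]{A} for $\Wdim(R^{\infty})=\dim R$ and $\gd(R^{\infty})\leq \dim R+1$, and \cite[Lemma 3.1]{AB} for the regular-sequence property of a system of parameters), but your decisive step --- the strict lower bound $\gd(R^{\infty})\geq d+1$ --- rests on a principle that you never prove and that is false as stated. You claim that ``a countably-but-not-finitely generated ideal over a coherent ring of finite weak dimension cannot admit a projective resolution as short as its flat resolution.'' Take $R=\prod_{\mathbb{N}}\mathbb{F}_2$ and $I=\bigoplus_{\mathbb{N}}\mathbb{F}_2$: the ring is coherent of weak dimension zero, and $I$ is countably but not finitely generated, yet $I=\bigoplus_n e_nR$ is a direct sum of direct summands of $R$, hence projective, so $\pd(I)=\fd(I)=0$ and no jump occurs. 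Restricting to the quasilocal case does not rescue the argument in the form you give it: Kaplansky (projective $=$ free over quasilocal rings) shows that the non-finitely generated ideal $\fm_{R^{\infty}}$ itself is not projective, but that only yields $\pd(\kappa)\geq 2$, not $\geq d+1$. What you actually need is that the $d$-th \emph{syzygy} of $\kappa$ in a free resolution --- which is flat since $\fd(\kappa)=d$, but which is not an ideal and could a priori be free of countable rank --- fails to be projective, and nothing in your sketch addresses that module. Osofsky's techniques give the upper bound $\pd\leq\fd+1$; they do not produce the jump.

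The paper closes exactly this gap with Lemma \ref{sr} (Vasconcelos), which you mention only as a ``contrast'' when it is in fact the missing tool: if $\gd(R^{\infty})\neq d+1$, then $\gd(R^{\infty})=\Wdim(R^{\infty})$, so $R^{\infty}$ is a coherent super-regular quasilocal ring, and Lemma \ref{sr} forces $\fm_{R^{\infty}}$ to be generated by a regular sequence, in particular finitely generated; since $R^{\infty}$ is perfect one has $\fm_{R^{\infty}}=\fm_{R^{\infty}}^2$, so Nakayama gives $\fm_{R^{\infty}}=0$ and $R$ is a field, a contradiction. You already hold both ingredients (perfectness forces $\fm_{R^{\infty}}=\fm_{R^{\infty}}^2$, hence non-finite generation), so the repair is to run Lemma \ref{sr} in contrapositive rather than chase $\pd(\kappa)$ directly. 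Separately, your preliminary reduction is both unnecessary and flawed: passing to the completion changes the perfect closure (Example \ref{per}(i) \emph{assumes} $R$ complete; $\widehat{R}^{\infty}$ is not $R^{\infty}$ and coherence does not transfer), and your claim that a system of parameters is regular on each $R^{1/p^n}$ would require $R\simeq R^{1/p^n}$ to be Cohen--Macaulay, which is not assumed; the correct route is the direct citation of \cite[Lemma 3.1]{AB}, which is where the excellence/Gorenstein-image hypothesis actually enters.
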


\begin{proof}
Let $\underline{x}:=x_1,\ldots,x_d$ be a system of parameters for $R$ and set $p:=\Char R$.
\begin{enumerate}
\item[Claim A.]  One has $\underline{x}$ is a regular sequence on $R^{\infty}$.
\item[  Indeed,] this is  in \cite[Theorem 3.10]{Sh} when  $R$ is homomorphic image of a Gorenstein ring. The argument of \cite[Theorem 3.10]{Sh} is based on \emph{Almost Ring Theory}. The claim in the excellent case is  in \cite[Lemma 3.1]{ab}. This uses \textit{non-noetherian Tight Closure Theory}.
\end{enumerate}  In particular, $\fd(R^{\infty}/ \underline{x}R^{\infty})=d.$ Combining this with  \cite[Theorem 1.2]{A}, $\Wdim(R^{\infty})= \dim R<\infty.$
The same citation yields that $\gd(R^{\infty})\leq\dim R+1$.  Suppose on the contrary that $$\gd(R^{\infty})\neq\dim R+1.$$
 This says that  $\gd(R^{\infty})=\Wdim R$. Note that $R^{\infty}$ is coherent and quasilocal. Denote its maximal ideal by $\fm_{R^{\infty}}$.
By definition, $R^{\infty}$ is  super regular.
 In the light of Lemma \ref{sr},  $\fm_{R^{\infty}}$ is finitely generated. We bring the following claim.
 \begin{enumerate}
\item[Claim B.] One has $\fm_{R^{\infty}}=\fm_{R^{\infty}}^p$.
\item[  Indeed,] clearly, $\fm_{R^{\infty}}^p\subset\fm_{R^{\infty}}.$ Conversely, let $r\in\fm_{R^{\infty}}$.
Since $R$ is perfect, any polynomial such as $f(X):=X^p-r$ has a root. Let $r^{1/p}\in R^{\infty}$ be a root of $f$. We have $(r^{1/p})^p \in\fm_{R^{\infty}}$. Since  $\fm_{R^{\infty}}$ is prime, $r^{1/p}\in \fm_{R^{\infty}}$. We conclude from this that  $r=(r^{1/p})^p \in\fm_{R^{\infty}}^p$, as claimed.
\end{enumerate}
By  Nakayama's Lemma, $\fm_{R^{\infty}}=0$, i.e., $ R^{\infty}  $ is a field. So, $R$ is a field.
This is a contradiction.
\end{proof}

\begin{question}Let $R$  be a  local  domain  of prime characteristic.
 What is $\gd( R^{\infty}) $?
\end{question}

\begin{proposition}\label{cpure}
Let $R$ be a quasilocal containing a field of prime characteristic $p$ which is integral and purely inseparable extension  of an
 $F$-finite regular local ring $R_0$. If $R$ contains
all roots of  $R_0$,
then $R$ has a desingularization with respect to a flat directed system of noetherian regular rings.
\end{proposition}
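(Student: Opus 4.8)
The plan is to realise $R$ as the perfect closure of $R_0$ and then to use the Frobenius layers of that closure as the desingularization, exactly in the spirit of Example \ref{per}(ii). Write $p$ for the characteristic. Since $R$ is a purely inseparable extension of $R_0$, each $r\in R$ satisfies $r^{p^n}\in R_0$ for some $n$, so it is natural to set
$$R_n:=\{x\in R: x^{p^n}\in R_0\}\qquad(n\in\mathbb{N}).$$
The freshman's dream $(x+y)^{p^n}=x^{p^n}+y^{p^n}$, valid in characteristic $p$, shows immediately that each $R_n$ is a subring of $R$ containing $R_0$, that $R_n\subseteq R_{n+1}$, and that pure inseparability gives $R=\bigcup_n R_n={\varinjlim}_n R_n$.

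The next step is to identify each layer with $R_0$. Consider the $n$-fold Frobenius $x\mapsto x^{p^n}$; by construction it carries $R_n$ into $R_0$ and is a ring homomorphism $F^n\colon R_n\to R_0$. The hypothesis that $R$ contains all roots of $R_0$ says exactly that every $a\in R_0$ has a $p^n$-th root in $R$, which then lies in $R_n$; hence $F^n$ is surjective. Granting that $R$ is reduced, $F^n$ is also injective, so it is an isomorphism $R_n\cong R_0$, and therefore each $R_n$ is noetherian and regular. Assembling the layers, $\{R_n:n\in\mathbb{N}\}$ is a direct system of noetherian regular rings with ${\varinjlim}_n R_n=R$, which is precisely a desingularization of $R$.

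The delicate point, and the one I expect to be the real obstacle, is the injectivity of $F^n$, equivalently the reducedness of $R$: surjectivity and the limit identification are formal consequences of the two hypotheses, but upgrading $R_n\twoheadrightarrow R_0$ to an isomorphism requires excluding nonzero nilpotents. Reducedness is in any case necessary here, since any ring admitting a desingularization is reduced; granting it, the argument also pins down the structure of $R$, as follows. Containing all roots gives $R_0^{\infty}\subseteq R$ with $R_0^{\infty}$ perfect and reduced (as $R_0$ is regular). For $r\in R$ with $r^{p^n}\in R_0$, the unique $p^n$-th root $s:=(r^{p^n})^{1/p^n}\in R_0^{\infty}$ satisfies $(r-s)^{p^n}=0$, so reducedness forces $r=s\in R_0^{\infty}$ and hence $R=R_0^{\infty}$. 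With $R=R_0^{\infty}$ in hand the layers $R_n=R_0^{1/p^n}$ are visibly isomorphic to $R_0$, completing the proof.
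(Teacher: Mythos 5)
Your route is genuinely different from the paper's, and, granting reducedness (on which more below), it works. The paper realizes $R$ as a filtered union of subrings $R_i$ that are module-finite over $R_0$ and local, proves each $R_i$ is Cohen--Macaulay by showing a regular system of parameters of $R_0$ remains a regular sequence on $R_i$, obtains flatness of $R_0\to R_i$ from Lemma \ref{kunz}, and then descends regularity through a flat local map with zero-dimensional regular fibre via \cite[Theorem 2.2.12]{BH}; the output is a desingularization by module-finite extensions with \emph{flat} morphisms, and that flatness is exactly what Corollary \ref{ccpure} consumes. Your Frobenius filtration $R_n=\{x\in R: x^{p^n}\in R_0\}$ replaces all of this machinery by the single observation that $F^n\colon R_n\to R_0$ is an isomorphism (onto by the roots hypothesis, injective by reducedness), so each layer is literally a copy of $R_0$ and regularity comes for free; this is indeed the mechanism of Example \ref{per}(ii), as you say. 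Your approach has two further advantages: it never uses that $R$ is quasilocal, and it avoids the paper's reduction ``we may assume $R_0^{1/p}\subseteq R_1$,'' which is in fact impossible unless $R_0$ is F-finite, since $R_1$ is required to be module-finite over $R_0$, whereas your layers exist unconditionally. Conversely, to recover Corollary \ref{ccpure} from your construction one needs a remark you did not make: under your isomorphisms $R_n\cong R_0$, the inclusion $R_n\hookrightarrow R_{n+1}$ becomes the Frobenius endomorphism of $R_0$, which is flat by Kunz's theorem (or by Lemma \ref{kunz} applied to $R_0\to R_0$); so your desingularization also consists of flat morphisms.

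The one defective step is your handling of reducedness, and the defect is logical: that reducedness is \emph{necessary} for the conclusion cannot license assuming it while deriving the conclusion from the hypotheses. If the hypotheses do not force reducedness, the statement is simply false --- and, read literally, it is: $R=\mathbb{F}_p[\epsilon]/(\epsilon^2)$ is quasilocal, contains a field of characteristic $p$, is a purely inseparable extension of the noetherian regular ring $R_0=\mathbb{F}_p$ (the $p$-th power of every element lies in $\mathbb{F}_p$), and contains all roots of $R_0$ since $\mathbb{F}_p$ is perfect; yet it is not reduced, so it admits no desingularization. Hence ``reduced'' (or ``domain'') must be read into ``purely inseparable extension.'' You are in good company: the paper's own proof makes the same tacit assumption, since without reducedness the closed fibre $R_1/\fm_0R_1$ need not be a field (it can be $\kappa[\epsilon]/(\epsilon^2)$), and \cite[Theorem 2.2.12]{BH} is then inapplicable. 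In the paper's intended applications, such as Example \ref{e}, where everything sits inside $F[[x,y]]$, the rings are domains, so this is a defect of the statement's formulation rather than of your argument or of the paper's.
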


\begin{proof}
Note that
$R_0$ contains a field. Write $R$ as a directed union of a filtered system  $\{R_i\}$ of its subrings which are finitely generated algebras over $R_0$.

\begin{enumerate}
\item[Claim A.]  The ring $R_i$ is local.
\item[  Indeed,]  by assumption $R_0$ is local. Denote its maximal ideal by $\fm_0$. Let $\fm_i$ and $\fn_i$ be two maximal ideals of $R_i$.
 Both of them lying over $\fm_0$.
Let $r\in\fm_i$. Since the extension $R_0\to R_i$ is integral and purely inseparable, we observe that $r^{p^n}\in R_0$ for some $n\in \mathbb{N}$.
Thus $r^{p^n}\in \fm_0=\fn_i\cap R_0$. Since $\fn_i$ is prime and $r\in R_i$, we deduce that $r\in\fn_i$. Hence $\fm_i\subset \fn_i$.
Therefore  $\fm_i= \fn_i$, because $\fm_i$ is maximal. So, $R_i$ is local,
 as claimed.
\end{enumerate}

We denote the unique maximal ideal of $R_i$ by $\fm_i$.
Since $R_0\to R_1$ is integral, $d:=\dim R_0=\dim R_1$.
Remark that if $y\in R_1$, there is $n_1\in \mathbb{N}$ such that $y^{p^{n_1}}\in R_0$.
Since $R_0\to R_1$ is integral, $ R_1$ is finitely generated as an $R_0$-module. From this we can pick a uniform $n$ such that for any $y\in R_1$,
 $y^{p^{n}}\in R_0$. After adding $R_0^{1/p^{n}}$ to $R_1$ and denoting the new ring again by $R_1$, we may assume that $R_0^{1/p^{n}}\subset R_1$,
 here is a place that we use the assumptions $R_0^{\infty}\subseteq R$ and that $R_0^{1/p^{n}}$ is finite over $R_0$.
Now, let $\underline{x}$ be a minimal generating set for $\fm_0$. In particular, $\underline{x}$  is a regular system of parameters on $R_0$.

 \begin{enumerate}
\item[Claim B.] Let $n$ be as the above paragraph. Then $\fm_1=(x_1^{1/p^{n}},\ldots,x_d^{1/p^{n}}) R_1$.
\item[  Indeed,]  let $y\in \fm_1$. Then   $y^{p^n}\in R_0$. In particular, $y^{p^n}\in \fm_0$. Then   $y^{p^n}=\sum r_ix_i$ where $r_i\in R_0$.
 We are in a situation to take $p^n$-th root in $R_1$,  this is due to the choose of $n$. Taking $p^n$-th roots, we have $y=\sum r_i^{1/p^{n}}x_i^{1/p^{n}}$, where $r_i^{1/p^{n}}\in R_1$ and $x_i^{1/p^{n}}\in \fm_1$.
Thus $y\in(x_1^{1/p^{n}},\ldots,x_d^{1/p^{n}}) R_1$. Therefore, $\fm_1\subset(x_1^{1/p^{n}},\ldots,x_d^{1/p^{n}})R_1\subsetneqq R_1$.  The reverse inclusion is trivial, because $\fm_1$ is maximal. So,  $\fm_1=(x_1^{1/p^{n}},\ldots,x_d^{1/p^{n}}) R_1$
 as claimed.
\end{enumerate}

In view of the claim,  $R_1$ is regular.
Since $\fm_0 R_1$   is   $\fm_1$-primary, the extension $R_0\to R_1$ is flat, please see Lemma \ref{kunz}.
Repeating this, one may observe that $\{R_i\}$ is a desingularization for $R$
and that $R_i\to R_j$ is  flat.
\end{proof}

\begin{lemma}\label{abo}
One has ${\varinjlim}_{i\in\Gamma}R_i[X]
 \simeq({\varinjlim}_{i\in\Gamma}R_i)[X]$.
\end{lemma}

 \begin{proof}
This is straightforward and we leave it to the reader.
\end{proof}

\begin{corollary}
Adopt the notation of Proposition \ref{cpure}. Then
$R$ is stably coherent.
\end{corollary}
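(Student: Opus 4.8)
The plan is to deduce this corollary from the desingularization established in Proposition \ref{cpure} together with Application \ref{ap}$\mathrm{(v)}$, which asserts that a ring admitting a desingularization by \emph{flat} morphisms is stably coherent. First I would invoke Proposition \ref{cpure} to write $R = {\varinjlim} R_i$, where $\{R_i\}$ is a desingularization consisting of noetherian regular local rings and, crucially, where the transition maps $R_i \to R_j$ are flat; the proof of Proposition \ref{cpure} explicitly produces flat morphisms via Lemma \ref{kunz}, so this refined conclusion is available to me.

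The heart of the argument is the transfer of stable coherence along a flat direct limit. For each $i$, the ring $R_i$ is noetherian, hence $R_i[T]$ is noetherian (Hilbert basis theorem) and therefore coherent, so each $R_i$ is stably coherent. I would then pass to the polynomial extension: since $R[T] = {\varinjlim} R_i[T]$ and the induced maps $R_i[T] \to R_j[T]$ are again flat (flatness is preserved under base change by $R_i[T]$, and polynomial extension of a flat map is flat), the system $\{R_i[T]\}$ is a direct limit of coherent rings along flat morphisms. The key step is to recall the standard fact that a direct limit of coherent rings with flat transition maps is coherent; this is exactly the mechanism underlying Application \ref{ap}$\mathrm{(v)}$. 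Applying it to $\{R_i[T]\}$ yields that $R[T]$ is coherent, which is precisely the definition of $R$ being stably coherent.

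I expect the main obstacle to be the bookkeeping needed to verify that the transition maps on the polynomial-ring level remain flat, and to justify that the coherence-under-flat-direct-limit principle applies verbatim. Concretely, one must check that a finitely generated ideal of $R[T]$ descends to a finitely generated ideal of some $R_i[T]$, that its finite presentation there is preserved under the flat base change $R_i[T] \to R[T]$, and that flatness guarantees the relevant syzygies lift compatibly. All of these are routine once the flat-limit framework of the earlier sections is in force, so the essential content is simply the observation that Proposition \ref{cpure} delivers \emph{flat} morphisms, after which Application \ref{ap}$\mathrm{(v)}$ closes the argument.
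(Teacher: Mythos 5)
Your proof is correct and follows essentially the same route as the paper: the paper's own (terser) proof likewise extracts from Proposition \ref{cpure} that $R$ is a flat direct limit of noetherian regular rings and then cites \cite[Theorem 2.3.3]{G}, which is exactly the flat-direct-limit coherence principle you apply to the system $\{R_i[T]\}$. The bookkeeping you spell out (passing to $R[T]=\varinjlim R_i[T]$, flatness of $R_i[T]\to R_j[T]$, noetherianness of each $R_i[T]$) is precisely what that citation packages.
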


\begin{proof}
There is a flat directed system $\{R_i\}$ of noetherian regular rings such that its direct limit is $R$. In particular, $R_i[X]\to R_j[X]$ is  flat.
In view of \cite[Theorem 2.3.3]{G} and Lemma \ref{abo}, $R$ is stably coherent.
\end{proof}
The assumption $R_0^{\infty}\subseteq R$ in Proposition \ref{cpure} is really needed:

\begin{example}\label{e}
Let $F$ be a field of characteristic $2$ with $[F:F^2] = \infty$. Let $\hat{R_0 }= F[[x, y]]$ be the formal power series on variables $\{x,y\}$ and look at
$R_0 := F^2[[x, y]][F]$. Let $\{b_i:i\in \mathbb{N}\}\subset F$ be an infinite set of
$2$-independent elements. Set \begin{enumerate}
\item[] $e_n := \frac{\sum_{i=n}^{\infty} (xy)^ib_i}{y^n}$,
\item[] $f_n := \frac{\sum_{i=n}^{\infty} (xy)^ib_i}{x^n}$.
\end{enumerate} Define
$R := R_0[e_i,f_i:i\in \mathbb{N}] $. This is quasilocal. Denote its unique maximal ideal by $\fm$.
Recall from  \cite[Page 206]{N} that  $R_0\to \hat{R_0 }$ is integral and purely inseparable.  Since
$R_0\subset R \subset\hat{R_0 }$, we get that $R_0\subset R$ is integral and purely inseparable.
By \cite[Example 1]{K},
$\pd_R(\fm)=\infty$
and that $\fm$
is finitely generated. In particular, $R$ is not
regular. We conclude  from Proposition \ref{sy2} that
$R$ is not coherent. So, $R$ has no  desingularization with respect to its noetherian regular subrings.
\end{example}

\section{ Desingularization via purity}
We begin by recalling the notion of the \textit{purity}.
Let $M\subset N$ be  modules over a ring $R$. Recall that   $M$ is pure in $N$ if $M\otimes_R L\to N\otimes_R L $ is monomorphism
for every $R$-module $L$. We say a directed system $\{R_i:i\in \Gamma\}$ is   pure if  $R_i\longrightarrow R_j$ is pure  for all $i,j\in\Gamma$ with $i\leq j$.

\begin{proposition}\label{cri}
 Let $\{(R_i,\fm_i):i\in \Gamma\}$ be a pure directed system of local rings and  such that the maximal ideal of
$(R,\fm):={\varinjlim}_{i\in I}R_i$  has a finite free resolution.
 Then the following assertions are true:
 \begin{enumerate}
\item[$\mathrm{(i)}$]  There exists an $i\in \Gamma$ such that $R_j$ is regular for all $i \leq j$.
\item[$\mathrm{(ii)}$] There exists an $i\in \Gamma$ such that $R_j\to R_k$ is flat for all $i \leq j\leq k$.
\item[$\mathrm{(iii)}$]  The ring $R$ is noetherian and regular.
\end{enumerate}
\end{proposition}

\begin{proof}
$\mathrm{(i)}$: Look at the following
finite free resolution of $\fm$:$$\xymatrix{0\ar[r]&F_{N}\ar[r]&\ldots\ar[r]&F_{j+1}\ar[r]^{f_j}&F_{j}\ar[r]&\ldots\ar[r]&F_{0}\ar[r]&\fm\ar[r]&0,}$$
where $F_{j}$ is finite free and $f_j$ is given by a matrix with  finite rows and finite columns.
By $I_{t}(f_j)$, we mean the ideal
generated by $t\times t$ minors of  $f_j$.
 Let $r_j$ be the expected rank of $f_j$, see  \cite[Section 9.1]{BH} for its definition.
 By \cite[Theorem 9.1.6]{BH},  $\Kgrade_R(I_{r_j}(f_j), R)\geq j.$
There is an index $i\in \Gamma$ such that all of components of  $\{f_j\}$ are in $R_i$. Let $F_{j}(i)$ be the
free $R_i$-module with the same rank as $F_{j}$. Consider $f_j$ as a matrix over $R_i$, and denote it by $f_j(i)$.
Recall that $\fm$ is finitely generated. Choosing $i$ sufficiently large,  we may assume that
$\fm=\fm_i R$.   For the simplicity of the reader, we bring the following claim.
\begin{enumerate}
\item[Claim A.]  Let $A$ be a subring of a commutative ring $B$. Let $X\in \Mat_{rs}(A)$ and $Y\in \Mat_{st}(A)$ be  matrices.  Look at $X\in \Mat_{rs}(B)$ and $Y\in \Mat_{st}(B)$ as  matrices whose entries coming from  $B$. If $XY\in\Mat_{rt}(B)$ is zero as  a matrix over $B $, then $XY\in\Mat_{rt}(A)$ is zero as a matrix over $A $.
\item[Indeed,]  since $A$ is a subring of $B$, the claim is trivial.
\end{enumerate}

Thus, $f_j(i)f_{j+1}(i)=0$.
Look at the following
complex of finite free  modules:$$\xymatrix{0\ar[r]&F_{N}(i)\ar[r]&\ldots\ar[r]&F_{j+1}(i)\ar[r]^{f_j(i)}&F_{j}(i)
\ar[r]&\ldots\ar[r]&F_{0}(i)\ar[r]&\fm_i\ar[r]&0.} $$
We are going to show that this is exact.  Recall that $I_{t}(f_j(i))$ is the ideal
generated by $t\times t$ minors of $f_j(i)$. Clearly,  $r_j$ is
the expected rank of $f_j(i)$. Let $\underline{z}:=z_1,\ldots,z_{s}$ be a generating set
for $I_{t}(f_j(i))$.  In view of the purity, there are monomorphisms $0\lo H_j
(\underline{z},R_i)\lo H_j
(\underline{z},R)$ for
all $i$ and $j$, see \cite[Exercise
10.3.31]{BH}. Then,
$$\Kgrade_R(I_{r_j}(f_j), R)\leq\Kgrade_{R_i}(I_{r_j}(f_j(i) ),R_i).$$
Thus, $\Kgrade_{R_i}(I_{r_j}(f_j), R_i)\geq j.$
Again, due to  \cite[Theorem 9.1.6]{BH}, $$0\lo F_{N}(i)\lo\cdots\lo F_{0}(i)$$ is acyclic. Thus, $\pd(R_i/\fm_i)<\infty$.
By Local-Global-Principle (please see \cite[Theorem 2.2.7]{BH}),  $R_i$ is  regular.

$\mathrm{(ii)}$:   By purity, $\dim R_m\geq \dim R_n$ for all $n\leq m$, see \cite[Remark 4 and Corollary 5]{Br}.
 Again, in the light of purity,$$\fm_m =(\fm_m R)\cap R_m =(\fm_n R)\cap R_m = (\fm_n R_m)R\cap R_m = \fm_n R_m,$$
    for all $n\leq m$.
Thus $\fm_n R_m=\fm_m$. Denote the  minimal number of elements required to generate  the ideal $\fm_m$ by $\mu(\fm_m)$. Consequently,  $\mu(\fm_m)\leq\mu(\fm_n)$. By part  $\mathrm{(i)}$,
$(R_i,\fm_i)$ is regular. Hence $$\dim R_m=\mu(\fm_m)\leq\mu(\fm_n)= \dim R_n.$$ Therefore, $\dim R_m= \dim R_n$. In view of Lemma \ref{kunz},
$R_n\to R_m $ is  flat     for all  $n\leq m$, as claimed.

$\mathrm{(iii)}$: Recall from $\mathrm{(ii)}$ that $\fm_m = \fm_n R_m$ for all $n<m$. In view of
\cite{O}, $R$ is noetherian. The ring $R$ is regular, because $\pd_R(R/\fm)<\infty$.
\end{proof}

\begin{example}\label{ex}Here, we present a desingularization with respect to  a non-pure directed system. To this end
let $R:=\{n+\sum_{i=1}^{\ell} n_i t^i:n\in \mathbb{Z}, n_i\in \mathbb{Z}[1/2]\}$.
Then $R$ has a desingularization $\{(R_i,\phi _{i,j})\}$ where $\phi _{i,j}:R_i\to R_j$ is not pure.
\end{example}

\begin{proof}
For each $i\in \mathbb{N}$, set $R_i:=\mathbb{Z}[t/2^i]$. Note that  $R_i $ is a noetherian regular ring.
The system  $\{R_i\}_{i\in \mathbb{N}}$  is  directed with respect to the inclusion.
 Let $f\in R$. Then $f=n+\sum_{i=1}^{\ell} n_i t^i$ where $n_i\in \mathbb{Z}[1/2]$. There is $k\in\mathbb{Z}$ such that $n_i=m_i/2^{ik}$
 for some $m_i\in \mathbb{Z}$ and for all $i$. Deduce by this that $f\in R_{k}$. Thus,  $\{R_i\}$ gives a desingularization for $R$.
Now, we look at the following equation  $2\textbf{X}=t/2^i.$ Clearly,
$t/2^{i+1}\in R_{i+1}$
is a solution.
The equation has no solution in  $R_{i}$. In the light of  \cite[Theorem 7.13]{Mat}, the map  $R_{i}\to R_{i+1}$ is not pure, as claimed.
\end{proof}

Also, we present the following example.

\begin{example}
Let $D$ be a noetherian regular integral domain  with a fraction field $Q$. Let $R:=\{f\in Q[X_1,\ldots,X_n]:f(0,\ldots,0)\in D\}$.
Then $R$ has a desingularization.
\end{example}

\begin{proof}
Without loss of the generality we may assume that $D\neq Q$.
Recall that $$R=\{f\in Q[X_1,\ldots,X_{n}]:f(0,\ldots,0)\in D\}\simeq\bigotimes_{1\leq i \leq n}\{f\in Q[X_i]:f(0)\in D\}.$$
 Let $F:=Q^{\oplus n}$. This  is flat as a $D$-module.
 Under the identification $\sym_Q(Q)=Q[X]$, the image of $\sym_{D}(Q)$ in the natural map
$\sym_{D}(Q)\to\sym_Q(Q)$
is $D+XQ[X]$. So
\[\begin{array}{ll}
\sym_{D}(F)&\simeq\bigotimes_{n}\sym_{D}(Q)\\
&\simeq\bigotimes_{n}\{f\in Q[X_i]:f(0)\in D\}.
\end{array}\]
Due to the Lazard's theorem, there is a directed system of finitely generated free modules $\{F_i:i\in \Gamma\}$
with direct limit $F$.
In view of \cite[8.3.3]{G},
$$R\simeq\sym_{D}(F)\simeq{\varinjlim}_{i \in \Gamma} \sym_{D}(F_i).$$Since $R_i:=\sym_{D}(F_i)$ is a noetherian regular ring, $R$ can be realized as a direct limit of the directed system $\{R_i:i\in \Gamma\}$ of noetherian
regular rings.
\end{proof}

\section{  Desingularization of products}

The following definition is taken from \cite{MR}.

\begin{definition}
A ring is called \textit{DLFPF}, if it is a direct limit of a finite product of fields.
\end{definition}

\begin{question}\label{q}(See \cite[Question 10]{MR})
Let $E$ be a field and let $R$ be a maximal \textit{DLFPF} subring
of $\prod_{\mathbb{N}}E$. Does $R$ contain a field isomorphic to $E$?
\end{question}

The
 following answers Question \ref{q} in the finite-field case.

\begin{proposition}\label{pro}
Let $F$ be a finite field. Then  $\prod_{\mathbb{N}} F\simeq {\varinjlim}_{}(\bigoplus_{finite}F_i)$
 where $F_i$ is a field.
 \end{proposition}

 \begin{proof}
 Let $S\subset \prod_{\mathbb{N}} F$ be the subring consisting
of all elements that have only finitely many distinct coordinates.
  Since $F$ is finite,  $S= \prod_{\mathbb{N}} F$.
 By \cite[Proposition 5.2]{GH},
$S=\bigcup A_j$  where   $A_j$  is an artinian regular subring of $\prod_{\mathbb{N}} F$.
It remains to note that  any artinian regular ring is isomorphic to a finite direct product of fields.
\end{proof}

\begin{corollary}\label{prod}
 Let $F$ be a finite field. Then
 $\prod_{\mathbb{N}} F$ is stably coherent.
 \end{corollary}

 \begin{proof}
 We adopt the notation of Proposition \ref{pro} and we denote a finite family of variables by $\underline{X}$.
Let $R_{\gamma}:=(\bigoplus_{1\leq i \leq n_{\gamma}}F_i)$.
Let $\gamma\leq \delta$.
Since $R_{\gamma}$ is of zero weak dimension,
$R_{\gamma}\to R_{\delta}$ is flat.
 It turns out that $R_{\gamma}[\underline{X}]\to R_{\delta}[\underline{X}]$ is flat.
In view of Lemma \ref{abo} and by Proposition \ref{pro},  $$
(\prod F)[\underline{X}]
\simeq({\varinjlim}_{\gamma\in\Gamma}R_{\gamma})[\underline{X}]
\simeq {\varinjlim}_{\gamma\in\Gamma}(R_\gamma[\underline{X}]).$$ is a flat direct limit of  noetherian  regular rings.
By \cite[Theorem 2.3.3]{G},  $\prod_{\mathbb{N}} F$  is stably coherent.
\end{proof}

In the following item we collect some homological properties of the products of rings.

\begin{fact}\label{so}  i) Let $R$ be a noetherian local ring.
Combining \cite[Theorem 6.1.2]{G} and \cite[Theorem 6.1.20]{G}, yields that
$\prod_{\mathbb{N}} R$ is coherent if and only if
$\dim R<3$.

ii)
Let $\{R_N:n\in \mathbb{N}\}$ be a family of rings such that $R:=\prod R_n$ is
coherent. Then $\Wdim R = \sup \{ \Wdim R \}$, please see \cite[Theorem 6.3.6]{G}.

iii) (See the proof of \cite[Corollary 2.47]{O1}) Let  $\fa$ be an ideal of a $\aleph_n$-noetherian ring
$A$. Then $\pd_{A}(A/\fa)\leq\fd_{A}(A/\fa)+n+1$.

iv) (See \cite[Introduction]{MR}) The ring  $\prod_\mathbb{N}\mathbb{Q}$
is not written as a direct limit of a finite product of fields.
 \end{fact}

\begin{example}\label{non}
The  ring $R:=\prod_\mathbb{N}\mathbb{Q}$
 is coherent and regular. But, $R$ has no desingularization with respect to its noetherian regular subrings.
\end{example}

\begin{proof}
By Fact \ref{so} i),
$\prod_\mathbb{N}\mathbb{Q}$ is coherent.
In the light of  Fact \ref{so} ii), $R$ is \textit{von Neumann  regular}, i.e., $R$ is of zero weak dimension.
 We deduce from Fact \ref{so} iii) that $\gd(R)<3$. Therefore, $R$
 is coherent and regular.
Suppose on the contrary that $R$ can be written as a direct limit of its noetherian regular subrings $\{R_i:i\in I\}$. Due to \cite[Corollary 2.2.20]{BH}, $R_i\simeq R_{i1}\times\ldots \times R_{in_{i}}$, where
$R_{ij}$ is a noetherian regular domain.  By $Q_{ij}$ we mean the fraction field of $R_{ij}$. Recall that $\varinjlim _{i}\varinjlim _{j}Q_{ij}$
is a direct limit of finite product of its subfields.
This is a  consequence of the fact that any double direct limit is a direct limit, please see \cite[III) Proposition 9]{bo}.
Recall from \cite[Corollary 4]{MR} that a von Neumann  regular subring of
$DLFPF$  is $DLFPF$. We conclude by this that
$\prod_\mathbb{N}\mathbb{Q}$ can be written as a direct limit of a finite product of fields.
This is a contradiction, please see Fact \ref{so} iv).
\end{proof}

\begin{acknowledgement}
I would like to thank M. Dorreh, S. M. Bhatwadekar and H. Brenner, because of a talk on direct limits.
I thank  the anonymous referee for various suggestions.
\end{acknowledgement}


\end{document}